\newtheorem{defn}{Definition}[section]
\newtheorem{thm}[defn]{Theorem}
\newtheorem{lem}[defn]{Lemma}
\newtheorem{cor}[defn]{Corollary}
\newcommand \C{\mathbb C}
\newcommand \N{\mathbb N}
\newcommand \R{\mathbb R}
\newcommand \B{\mathbb B}
\newcommand \U{\mathbb U}
\newcommand \T{\mathbb T}
\newcommand \sub{\subset}
\newcommand \ove{\overline}
\numberwithin{equation}{section}
 \title{ H\"older regularity of generic manifold}
 \author{Azimbay Sadullaev and Ahmed Zeriahi}
\begin{document}
\maketitle
\begin{center}
{\it Dedicated to Professor J\'ozef Siciak for his $85^{th}$ birthday}
\end{center}

\vskip  0.3 cm

\noindent{\bf Abstract :} In this paper we study H\"older continuity of the pluricomplex Green function with logarithmic growth at infinity  of a smooth generic submanifold of $\C^n$. In particular we prove that the pluricomplex Green function of any $C^2$-smooth generic compact submanifold  of $\C^n$ (without boundary) is Lipschitz continuous in $\C^n$. 

\noindent{{\bf Key words :} Generic manifold, attached analytic discs, plurisubharmonic Green function, pluripolar sets, pluriregular sets, H\"older continuity}. 

\noindent{\bf AMS Classification : 32U05, 32U15, 32U35, 32E30, 32V40  } 

 \section{Introduction and statement of the main result}
 Real $m-$planes $\Pi \sub \C^n,
\,\,  \text{dim}_{R}\Pi=m, \,\,\, m \in \N^+,$ which are not contained in
any proper complex subspace of $\C^n$ are important in complex
analysis and pluripotential theory. The $\C-$hull of such plane
$\Pi$ is equal to all $\C^n$ i.e. $\Pi + J \Pi = \C^n$ ($J$ is the
standard complex structure on $\C^n$) and any non empty open
subset of $\Pi$ is non pluripolar in $\C^n$. Such planes are
called {\it generic} (real) subspaces of $\C^n$.
 Correspondingly, a real smooth submanifold $M \subset \C^n$ is said to be {\it generic} if for each $z \in M$,
 its real tangent space $T_z M $ is a generic subspace of $\C^n$  i.e. $T_z M + J T_z M = \C^n$.
 Such submanifold has real dimension $m \geq n$.
The case of minimal dimension $\text{dim} M = n$ is the most relevant for our concern. In
this case for each $z \in M$, the tangent space $T_z M$ does not
contain any complex line i.e. $ T_z M \cap J T_z M = \{0\}$ and
$M$ is said to be {\it totally real}.

Observe that any smooth Jordan curve in $\C$ is totally real, hence any product of $n$ smooth Jordan curves in $\C$ is a smooth compact totally real submanifold of dimension $n$ in $\C^n$. Moreover the class of  smooth compact totally real submanifolds of dimension $n$ in $\C^n$ is  stable under small $C^{2}$-perturbations.
 
 Generic submanifolds of $\C^n$ play an important role in Complex Analysis and Pluripotential Theory (see [Pi74], [KC76], [Sa76], [C92],  [EW10], [SZ12]).
 
  --------------------------------------------------------------------\\
*) The first author was partially supported by the fundamental research of Khorezm
Mamun Academy,Grant $\Phi 4-\Phi A-0-16928$.\\

In our previous paper [SZ12], we used the method of attached analytic discs to investigate
  non plurithiness of generic submanifolds of $\C^n$. We proved in [SZ12], that subsets of full measure in a generic $C^2-$smooth submanifold are non-plurithin at any point.

  Here we continue our investigations concerning the {\it pluripotential} properties  ({\it pluripolarity, pluriregularity }) of generic submanifolds in $\C^n$ by studing H\"older continuity of their pluricomplex Green functions.

All these properties can be expressed in terms of the pluricomplex Green function defined as follows.

Given a (bounded) subset $E \Subset \C^n$, we define its pluricomplex Green function as follows:
 $$
 V_E (z) := \sup \{u (z) : u \in \mathcal L (\C^n), u|E \leq 0\},
 $$
 where $\mathcal L (\C^n)$ is the  Lelong class of $psh$ functions $u$ in $\C^n$ with logarithmic growth at infinity i.e. $ \sup \{u (z) - \log^+ (z) : z \in \C^n\} < + \infty$ (see [Sic62], [Sic81], [Za76], [Kl]).

 Our main result is the following.
 \vskip 0.2 cm
\noindent {\bf Main theorem.} {\it Let $M \subset \C^n$ be a $C^2$-smooth generic compact
submanifold without boundary. Then its pluricomplex Green function $V_ M$ is Lipschitz continuous in $\C^n$.}
\vskip 0.3 cm

 This theorem is concerned with  compact submanifolds without boundary.
 In Section 3,  we will consider the more general case of a $C^2$-smooth generic submanifold and  prove that its extremal function is Lipschitz near each of its compact subsets (see Theorem 5.1).
 In the last section we consider the case of  a compact $C^2$-smooth generic submanifold with boundary and discuss the H\"older continuity property of its pluricomplex Green function.

 From Lipshitz continuity or more generally the H\"older continuity of the pluricomplex Green function $V_E^*(z)$ of a compact set $E \subset \C^n$, it follows that the compact set $E$ satisfies the following Markov's inequality: there exists positive constants $A, r > 0$ such that
  $$
  \Vert \nabla P (z) \Vert_E  \leq A d^r \Vert P (z)
  \Vert_E,\,\,z\in \C^n
  $$
  for any polynomial $P$ of degree $d$.

  This inequality plays an important role in approximation theory, gives sharp inequalities for polynomials and is useful for
  constructing continuous extension operators for smooth functions from subsets of $\R^n$ to $\C^n$ (see [PP86], [Ze93]).
On the other hand, Complex Dynamic gives a lot of examples of compact subsets for which the pluricomplex Green function $V_E^*(z)$  is H\"older continuous (see [FS92, K95, Ks97]).

  More recently an important result of C.T. Dinh, V. A. Nguyen  and N. Sibony shows that the Monge-Amp\`ere measure of
  a H\"older continuous plurisubharmonic function is a {\it moderate measure} (see [DNS]). In particular the equilibrium  Monge-Amp\`ere
  measure $\mu_E := (dd^c V_E)^n$ of a compact subset whose pluricomplex Green function $V_E^*(z)$  is H\"older continuous is a moderate measure,  which means  that it satisfies the following uniform version of Skoda's integrability theorem: for any compact family $\mathcal U$
    of $psh$ functions in a neighborhood of  a given ball $\B \subset \C^n$, there exists  $\varepsilon > 0$ and a
     constant $C > 0$ such that

$$
  \int_{\B} e^{- \varepsilon u} d \mu_E \leq C, \forall u \in \mathcal U.
  $$
  From this property, it follows that the equilibrium measure $\mu_E$ is "well dominated" by the Monge-Amp\`ere capacity (see [Ze01]), in the sense that for any given ball $\B \subset \C^n$, there is a constant $A > 0$ such that for any Borel set $S \subset \B$,

  $$
  \mu_E (S) \leq A \exp \left(- A \text{cap}_{\B} (S)^{- 1 \slash n}\right),
  $$
 where $ \text{cap}_{\B} (S)$ is the Monge-Amp\`ere capacity ([BT82]).

 This property turns out to play an important role in the theory of complex Monge-Amp\`ere equations as was discovered by S. Kolodziej (see [Ko98], [Ce98],[GZ07]).

\vskip 0.3 cm
\noindent {\bf {Acknowledgments:}}  1. It is a pleasure for us to dedicate this paper to Professor J\'ozef Siciak for his $85^{th}$ birthday. His remarkable achievement in Pluripotential Theory and Polynomial Approximation has been a great inspiration for us.

2.  We would like to thank Evgeniy Chirka and  Norm Levenberg for interesting and helpful discussions on these problems. \\

3. This work was started during the visit of the two authors to ICTP in June 2012. The
authors would like to thank warmly this Institute for providing excellent conditions
for mathematical research and specially Professor Claudio Arezzo for the invitation.

4. We would like to the thank the referee for his remarks.

\section{Definitions and preliminaries}

 Let us recall the following definitions

  \begin{defn} 1. We say that a subset $P\subset \C^n$ is {\it pluripolar} if
  there is a plurisubharmonic ($psh$) function $u: u \not\equiv
  -\infty$ but $u|_P\equiv -\infty.$ \\
  2. We say that $E$ is
  {\it pluriregular} if its pluricomplex Green function satisfies $ V^*_E(z)|_E\equiv 0$ i.e. $V_E = V_E^*$ on $\C^n$.
 \end{defn}
 Observe that any pluriregular set is non-pluripolar.
  It is well-known that if $E$ in non-pluripolar then $V_E^* \in \mathcal L (\C^n)$. Moreover  if  $E$ is   pluriregular compact set then $V_E = V_E^*$ is continuous in $\C^n$ (see [Sic81]).

  On the other hand, we know from ([BT82]) that if $E$ is non-pluripolar then the locally bounded $psh$ function $V_E^*$
   satisfies the following complex Monge-Amp\`ere equation
 $$
  (dd^c V_E)^n = 0, \ \ \text{on} \ \ \C^n \setminus \overline{E},
 $$
 which means that the equilibrium measure of $E$ defined as
 $$
  \mu_E :=(dd^c V_E^*)^n
 $$
 is a Borel measure supported in the closed set $\overline E$.

  Here we will introduce the following important notion.
  \begin{defn}
   We say that a set $E$ is $\Lambda_{\alpha}$-pluriregular, $\alpha >0$, if for every compact
  $K \subset E$ there exist a constant $A = A_K >0$ and a neighborhood $O = O_K$  of $K$ such that
  \begin {equation} \label{eq:H1}
  V_E^*(z) \leq A d^{\alpha} (z,K), \ \forall z \in O,
  \end {equation}
  where $d$ is the Euclidean distance in $\C^n$.
  \end{defn}
  Roughtly speaking, this definition means that the pluricomplex Green function $V_E$ of the set $E$ is H\"older continuous near any compact subset $K \subset E$.
  The following observation, which is essentially due to Z. B{\l}ocki, shows that if the set itself $E$ is compact, the definition means that its pluricomplex Green function is H\"older continuous (see [Sic97]).
  \begin{lem}
   If $E \subset \C^n$ is a $\Lambda_{\alpha}$-pluriregular compact set then its pluricomplex Green function $V_E$ is H\"older continous
  of order $\alpha$ globally in $\C^n$  i.e. for any $z, w \in  \C^ n$, we have
  $$
 \vert  V_E (z) - V_E (w) \vert \leq A \vert z - w\vert^ {\alpha}.
  $$
  \end{lem}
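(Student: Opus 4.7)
The plan is to upgrade the local H\"older control provided by Definition~2.2 to a global estimate via a translation argument in the Lelong class. Applying the hypothesis with $K := E$ yields a constant $A > 0$ and a neighborhood $O$ of $E$ on which $V_E^*(z) \le A\,d(z,E)^\alpha$; since $E$ is compact I fix $\delta_0 > 0$ so that the closed $\delta_0$-neighborhood of $E$ lies in $O$. The hypothesis also forces $V_E^* \equiv 0$ on $E$, so $E$ is pluriregular and $V_E = V_E^*$ is continuous and nonnegative on $\C^n$.

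The core step is the translation estimate: for every $a \in \C^n$ with $|a| \le \delta_0$ and every $z \in \C^n$,
\[
V_E(z+a) - V_E(z) \le A|a|^\alpha.
\]
To prove it, I set $u_a(z) := V_E(z+a) - A|a|^\alpha$. Translations preserve \psh and logarithmic growth at infinity, so $z \mapsto V_E(z+a)$ belongs to $\mathcal L(\C^n)$, and hence so does $u_a$. If $z \in E$, then $z+a$ lies in the $\delta_0$-neighborhood of $E$, so $V_E(z+a) \le A\,d(z+a, E)^\alpha \le A|a|^\alpha$, which gives $u_a \le 0$ on $E$. Thus $u_a$ is an admissible competitor in the supremum defining $V_E$, yielding $u_a \le V_E$ everywhere, i.e.\ the claim. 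Replacing $a$ by $-a$ and $z$ by $z+a$ gives the two-sided bound $|V_E(z+a) - V_E(z)| \le A|a|^\alpha$ for all $|a| \le \delta_0$.

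To cover the remaining case $|z - w| > \delta_0$, I use $V_E \ge 0$ together with the Lelong bound $V_E(z) \le \log^+|z| + C_E$ to get $|V_E(z) - V_E(w)| \le \log^+|z| + \log^+|w| + 2C_E$. For $|z - w|$ confined to $[\delta_0, R]$, the left-hand side is bounded by continuity while $|z - w|^\alpha \ge \delta_0^\alpha$; as $|z - w| \to \infty$, the right-hand side grows like $\log|z - w|$, which is dominated by $|z - w|^\alpha$. Enlarging the constant $A$ if necessary, one obtains the global H\"older inequality. The main technical point is the translation step itself: one must verify that $V_E(\cdot + a) - A|a|^\alpha$ really belongs to the Lelong class and is non-positive on $E$, so that the extremal characterization of $V_E$ may be invoked; everything else reduces to a routine comparison between logarithmic and power-type growth at infinity.
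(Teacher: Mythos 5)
Your translation step is the right idea and matches the paper's key mechanism, but your handling of the large-separation case $|z-w| > \delta_0$ contains a genuine gap. You bound $|V_E(z)-V_E(w)| \le \log^+|z| + \log^+|w| + 2C_E$ and then claim this is dominated by $|z-w|^\alpha$ (after a compactness step), but neither claim survives scrutiny. The set $\{(z,w) : \delta_0 \le |z-w| \le R\}$ is not compact, so \emph{"bounded by continuity"} is false: take $z = (R,0,\dots,0)$ and $w = (R+2\delta_0, 0, \dots, 0)$ with $R \to \infty$; then $|z-w|$ stays equal to $2\delta_0$ while $\log^+|z| + \log^+|w| \to \infty$. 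Likewise, the right-hand side does not grow like $\log|z-w|$; it grows like $\log|z| + \log|w|$, which can be made arbitrarily large while $|z-w|$ stays in $[\delta_0, R]$. The triangle-inequality bound simply loses all cancellation between $V_E(z)$ and $V_E(w)$, and that cancellation is precisely what makes the lemma true.

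The paper's route avoids the case split entirely. It first upgrades the local estimate of Definition 2.2 to a \emph{global} one — $V_E(z) \le A\,d(z,E)^\alpha$ for all $z \in \C^n$ — by using $V_E$'s logarithmic growth at infinity: near $E$ you already have (2.1); on $\{\delta_0 \le d(z,E) \le R\}$ the function $V_E$ is bounded while $d(z,E)^\alpha \ge \delta_0^\alpha$; and for $d(z,E) > R$, $V_E(z) \le \log|z| + C \le \log(d(z,E) + C') + C$, which is eventually dominated by $d(z,E)^\alpha$. Once this global inequality is in hand, your translation argument works for \emph{every} $a \in \C^n$, not just small ones: for $z \in E$ one has $d(z+a,E) \le |a|$, hence $V_E(z+a) \le A|a|^\alpha$, so $V_E(\cdot + a) - A|a|^\alpha \in \mathcal L(\C^n)$ is a competitor for $V_E$. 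That single step yields $|V_E(z)-V_E(w)| \le A|z-w|^\alpha$ with no restriction on the size of $|z-w|$. The missing ingredient in your proof is exactly this global extension of the distance bound; inserting it renders the second half of your argument unnecessary.
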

  \begin{proof}
  Observe that $V_E^*$ has a logarithmic growth at infinity. Therefore if $E$ is a $\Lambda_{\alpha}$-pluriregular compact set then
 its pluricomplex Green function $V_E$ satisfies (\ref{eq:H1}) for all $z\in \C^n$ i.e. for some constant $A > 0$ we have
  \begin {equation} \label{eq:H2}
  V_E(z) \leq A d^{\alpha} (z,E), \ \forall z \in \C^n.
  \end {equation}
  To prove that $V_E$ is H\"older continous
  of order $\alpha$ globally in $\C^n$, fix $h\in \C^n$ such that $\vert h\vert < \delta$
  and observe, that for any $z \in E$, $d (z+h,E) \leq \delta^{\alpha}$, which implies by the H\"older condition (\ref{eq:H2})  that  for any $z \in E$, $V_E (z + h) \leq  A \delta^{\alpha}$. Therefore the function defined by
    $u (z) := V_E (z + h) - A \delta^{\alpha} $  is a plurisubharmonic function such that  $u \in \mathcal L (\C^ n)$
    and $u \leq 0$ on $E$. By the definition of $V_E$, we conclude that $u \leq V_E (z)$ for any $z \in \C^ n$, which
    implies that $V_E$ is  H\"older continuous.

  \end{proof}

 \section{Analytic discs attached to generic manifolds}

\subsection{Construction of attached analytic discs}
 Let $\U := \{\zeta \in \C : \vert \zeta \vert < 1\}$ be the open unit disc
and $\T := \partial \U$ the unit circle.
 An analytic disc of $\C^n$ is a continuous function $f : \overline{\U} \longrightarrow \C^n$, which is holomorphic
  on $\U$. Let $M \subset \C^n$ be a given subset of $\C^n$ and  $\gamma \subset \overline{U} $ a given connected subset
   of the closed disc $\bar \U$. We say that the analytic disc $f$ is attached to  $M$ along  $\gamma$  if
    $f (\gamma) \subset M$.

 If $f : \ove{\U} \longrightarrow \C^n$ is an analytic disc and $F$ is a holomorphic function on a neighboorhood $D$
  of $f (\ove{\U})$, then $F \circ f$ is a holomorphic function on the unit disc $\U$. If $u$ is a plurisubharmonic
  function on $D$, then $u \circ f$ is a subharmonic function on $\U$.
 Therefore analytic discs enable us to reduce multidimentional complex problems to corresponding one dimensional
 complex problems.

  In the proof of our theorem, we need a smooth family of analytic disks.  We will use  Bishop's   equation
 for construct such a family (see [B65], [Pi74]). Let M be a
totally real submanifold of dimension  $n$ given locally by the
following equation
$$
M: = \left\{ {z = x + y \in B \times \R^n :y = h\left( x \right)}
\right\},
$$
where  $ B \subset \R^n $
 is a ball of center 0 and $
h : B \rightarrow \R^n$ a smooth map, such that
$$ h\left( 0 \right) = 0  \ \ \text{and} \ \
Dh\left( 0 \right) = 0.
$$

 Let   $v\left( \tau  \right) : \T \rightarrow \R^+ $ a $C^\infty $
 function on the unit circle $\T$ such that
 $$
v|_{\left\{e^{i \theta} \, : \, \, \,  \theta \in (0,\pi)\right\}}  = 0 \ \
\text{and} \  \ v|_{\left\{e^{i \theta} \,  :  \, \, \, \theta \in (\pi ,2\pi)
\right\}}
> 0 .
$$

  Assume that there exists a continuous mapping $X : \T \to \R^n $ which is a solution of the following Bishop equation

  \begin{equation}
X\left( \tau  \right) = c - \Im \left( {h \circ X + tv}
\right)\left( \tau  \right), \, \, \,\tau  \in \T,
\end{equation}
where $ \left( {c,t} \right) \in Q = Q_c \times Q_t \subset \R^n  \times \R^n $ is
a fixed parameter and $\Im $ is the harmonic conjugate operator
defined by the Schwarz integral formula

  \begin{equation}
\Im \left( X \right)\left( \zeta  \right) = \frac{1} {{2\pi
}}\int\limits_T {X\left( \tau  \right)\operatorname{Im}
\frac{{e^{i\tau }  + \zeta }} {{e^{i\tau }  - \zeta }}d\tau }
\,\,,\,\,\,\,\zeta  = re^{i\theta } ,
\end{equation}
normalized by the condition
$$ \Im X\left( 0 \right) = 0.
$$
  We will consider the unique harmonic extension $
X\left( \zeta  \right) $
 of the mapping $X $ to the unit disk $\U.$ Then  the following mapping
\begin{equation}
\begin{array}{ll} \Phi \left( {c,t,\zeta } \right): = X\left(
{c,t,\zeta } \right) + i\left[ {h^{*}\left( {c,t,\zeta } \right) +
tv\left(
\zeta  \right)} \right] = \\
= c + i\left\{ {h^{*}\left({c,t,\zeta } \right) + tv\left( \zeta
\right) + i\Im \left[ {h^{*}\left( {c,t,\zeta } \right) + tv\left(
\zeta \right)} \right]} \right\}
\end{array}
\end{equation}
provides a  family of analytic disks  $ \Phi \left( {c,t,\zeta }
\right):\overline \U  \to {\Bbb C}^n $
 such that
      \begin{equation}
\forall \left( {c,t} \right) \in Q\,,\,\forall \, \tau  \in \gamma
\,,\,\Phi \left( {c,t,\tau } \right) \in M.
\end{equation}
Here  $ X\left( {c,t,\zeta } \right) ,\,\, h^{*}\left( {c,t,\zeta
} \right) \,\,\text{and}\,\,v\left( \zeta \right) $
 are harmonic extensions of  $
X\left( {c,t,\tau } \right), \\ h \circ X\left( {c,t,\tau }
\right)\,\,\,\text{and}\,\,\,v\left( \tau  \right) $
 to the unit disk $\U$ respectively.

  We need a smooth family of disks $\Phi \left( {c,t,\zeta} \right)$.
Many constructions of analytic discs attached to generic manifolds along a part the circle have been given by many
 different authors, depending on the smoothness properties of the manifold (see [Pi74],  [75], [Sa76], [C92]).
 The most general and sharp result was proved by B.Coupet [C92]:

\vskip 0.3 cm
 \noindent {\bf Theorem} ([C92]). {\it Let $p>2n+1, q\geq 1$ be integers
 and $h\in C^q (B)$. Then there exist a constant $\delta_0 >0$ independing on $h$ and $p$ such, that
 for arbitrary $C^q$-smooth mapping $k(c,t,\tau): \R^{2n+1}\to \R^n, \,\,$ with compact support and
 $\parallel{k}\parallel_{W^{q,p}}\,  \leq \,\delta_0$, the equation
 \begin {equation}
   u=-\Im (h \circ u)+ k
  \end {equation}
 has a unique solution $u\in W^{q,p}(\T \times \R^{2n}).$

 Moreover, the harmonic extensions of $u$ and $h\circ u$ to  the unit disk $\U$
belong  to $C^q{(\U \times \R^{2n})}.$}
\vskip 0.3 cm

Let now $h\in C^q (\B)$. Observe that Bishop's equation (3.1) is a
particular case of the equation (3.5). Therefore, from the theorem of Coupet and
 Sobolev's embedding theorem  $W^{q,p}\subset C^{q-1}$, it follows that
 for  a small enough neighborhood $Q\ni 0$, $(c,t)\in Q$, the
Bishop equation (3.1) has unique solution $X(\tau,c,t):$ $ X,
h\circ X \in C^{q-1}{(\overline U \times Q)} \cap C^{q}{(U \times
Q)}.$ Note that the operator $\Im : W^{q,p} \to W^{q,p} $ is
continuous.

 Therefore, for a $C^2-$smooth generic submanifold $M \subset \C^n$,  we
obtain a smooth family of disks (3.3), attached to $M$, such that
 $$
 \left\| \Im X \right\|_1 \leq A \left\| X \right\|_1, \left\| \Im h\circ X
\right\|_1 \leq A \left\| h\circ X \right\|_1,
$$
where $A \,\,\text{is a
constant and}\,\, \left\| \cdot \right\|_1 $ is the $C^1$-norm in
$\tau \in T.$

\subsection  {Harmonic measure of boundary set of the unit disk.}  For arbitrary $\gamma \subset \T$ we put
 $\aleph (\gamma,\U)$-- class of functions
$$\left\{u(\zeta): \,u\in \text{sh}(\U)\cap
C(\overline{\U}),\,\,u|_\U<0,\,\, u|_\gamma \leq {-1}\right \},\,\,$$ and
set
$$
\omega(\zeta,\gamma,\U)=\text{sup}\{u(z): u\in \aleph\}, \ \zeta \in \U.
$$
Then (negative of) the upper semi-continuous regularisation $\omega^*(\zeta,\gamma,\U)$ is called the {\it harmonic measure} of $\gamma$ with respect to
 $\U$ at the point $\zeta$. The function $\omega^*$ is the unique solution of the Dirichlet problem:

$$
\Delta \omega^*=0 ,\,\, \omega^* |_\T=-\chi_{\gamma},
$$
where $\chi_{\gamma}$ is the chararacteristic function of $\gamma.$ By Poisson formula
$$
\omega^*(\zeta,\gamma,\U)= -\frac{1} {{2\pi}}\int\limits_\T {\chi
_{\gamma} \left( \tau  \right)\operatorname{Re} \frac{{e^{i\tau }
+ \zeta}} {{e^{i\tau }  - \zeta }}d\tau } \,\,,\,\,\,\,\zeta =
re^{i\theta }.
$$

For $\gamma=\{e^{i\varphi}: \,0\leqslant \varphi \leqslant \pi \}$ the  harmonic measure $\omega^*$ can be
expressed as follows.
\begin {equation}
\omega^*(\zeta,\gamma,\U)= \frac{1}{\pi} \, \, \text{arg}\,i\,\frac{1-\zeta}{1+\zeta} - 1.
\end {equation}
Let us define the sector at the point $1=e^{i\cdot0} \in \ove \U$ as follows 
$$
\Omega_{0,\alpha}=\cup \{ l\cap \ove {\U}: \,\,l\ni1,\,\, \pi/2 \leq \text{arg}\,{l}\leq \pi/2+\alpha\},
$$
where $l$ stands for a real line passing through the point $1$ and $ 0\leq \alpha\leq \pi/2$ is fixed. 
The sector $\Omega_{a,\alpha}$ at the point $e^{ia} \in \ove \U$ can define in the same way. From (3.6) it follows clearly that
 
\begin {equation}
\omega^*(\zeta, \gamma, \U) \leq -1+\alpha/\pi, \, \, \, \, \forall \zeta
\in \Omega_{0,\alpha}.
\end{equation}

A sector $\Omega_{a,\alpha}$  at the point $e^{ia}$  is said to be {\it admissible} if  $\,\,\Omega_{a,\alpha}\cap \partial \U \subset \gamma.$ 
From the last  fact, we deduce the following statement.
 \begin{lem} Let $\gamma=\text{arc}[e^{ia},e^{ib}] \subset \mathbb{T} ,\,\, 0 \leq a < b \leq 2\pi,$ be an arbitrary $\text{arc}$ on $\mathbb{T}$, and let $\Omega_{a,\alpha},\,\,$ be an admissible sector at the point $e^{ia}$. 
 Then $\omega^* (\zeta,\gamma,\U)$ is $\Lambda_1$-continuous in
  $\U \cup \mathbb{T} \setminus \{e^{ia}, e^{ib}\},\,\, \omega^*|_{{\gamma}^{\circ}} \equiv -1,\,\,\omega^*|_{\mathbb{T} \setminus
  {\gamma}}\equiv 0\,\,$ and $\omega^*$ satisfies (3.7) in  $\Omega_{a,\alpha}$.
  \end{lem}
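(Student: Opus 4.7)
The plan is to derive all four conclusions from an explicit Cayley-type formula for $\omega^*$, which generalizes (3.6) to an arbitrary arc.

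\emph{Step 1: explicit formula and boundary values.} Introduce the Möbius map
$$
T_\gamma(\zeta) := -e^{i(b-a)/2}\,\frac{\zeta - e^{ia}}{\zeta - e^{ib}}.
$$
Using $e^{i\theta}-e^{i\tau} = 2ie^{i(\theta+\tau)/2}\sin((\theta-\tau)/2)$, one verifies that $T_\gamma$ sends $\gamma$ onto $[0,+\infty)$, $\mathbb{T}\setminus\gamma$ onto $(-\infty,0]$, and has $\Im T_\gamma(0)>0$, so $T_\gamma$ is the Riemann map of $\U$ onto the upper half plane with $(e^{ia},e^{ib})\mapsto(0,\infty)$. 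Hence the function $\tfrac{1}{\pi}\arg T_\gamma(\zeta)-1$ (with $\arg\in[0,\pi]$) is harmonic on $\U$ with boundary values $-1$ on $\gamma^\circ$ and $0$ on $\mathbb{T}\setminus\gamma$; by uniqueness of the Dirichlet problem, it equals $\omega^*(\zeta,\gamma,\U)$, recovering (3.6) in the case $(a,b)=(0,\pi)$ and giving the claimed boundary identities.

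\emph{Step 2: $\Lambda_1$-continuity.} Since $T_\gamma$ is rational, non-vanishing on $\ove{\U}\setminus\{e^{ia}\}$, with $\Im T_\gamma>0$ on $\U$, the composition $\arg\circ T_\gamma$ is real-analytic on $\U\cup(\mathbb{T}\setminus\{e^{ia},e^{ib}\})$; its Euclidean gradient is locally bounded there, which yields the $\Lambda_1$-continuity on every compact subset avoiding the two endpoints.

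\emph{Step 3: sector inequality.} A direct computation gives $T_\gamma'(e^{ia}) = e^{-i(a+\pi/2)}/(2\sin((b-a)/2))$, of argument $-(a+\pi/2)$. Writing $\zeta = e^{ia}+re^{i\varphi}$ yields $\arg T_\gamma(\zeta) = \varphi-(a+\pi/2)+O(r)$ near the vertex. The admissible sector $\Omega_{a,\alpha}$, defined analogously to $\Omega_{0,\alpha}$ so that its constituent lines have absolute argument in $[a+\pi/2,a+\pi/2+\alpha]$, then forces $\varphi-(a+\pi/2)\in[0,\alpha]$, giving $\omega^*(\zeta)\leq\alpha/\pi-1+O(r)$ near $e^{ia}$. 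To upgrade this to (3.7) on the whole of $\Omega_{a,\alpha}$, I apply the maximum principle to $\omega^*$ on the open region $\Omega_{a,\alpha}\cap\U$: its $\mathbb{T}$-boundary lies in $\gamma^\circ$ (by admissibility), so $\omega^*\equiv-1$ there, while its chord-boundary is a line through $e^{ia}$ that $T_\gamma$ sends to a circular arc from $0$ with initial tangent angle $\alpha$ and terminus on the positive real axis. The step I expect to be most delicate is verifying that $\arg T_\gamma\leq\alpha$ on this image arc: the tangent-chord theorem places the arc on the inner side of the tangent line at $0$ of slope $\tan\alpha$, so every point on the arc has $\arg\in[0,\alpha]$, which gives $\omega^*\leq\alpha/\pi-1$ on the chord and hence, by the maximum principle, throughout $\Omega_{a,\alpha}$.
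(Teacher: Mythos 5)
Your proof is correct and follows the route the paper implicitly intends: the paper establishes the explicit formula (3.6) only for $\gamma$ the upper semicircle and then asserts (3.7) and the lemma without further argument. Your contribution is to write the general-arc Möbius formula $T_\gamma$ and carry the computation through, and in particular to justify the sector inequality (3.7) over the \emph{entire} sector rather than only infinitesimally near the vertex, which the paper dismisses as "clear." The tangent--chord step is exactly where the real content lies and your treatment is sound: since the image circle is tangent at $0$ to the line of inclination $\alpha$ and its center lies below that line, the upper arc from $0$ to $\tan\alpha$ satisfies $0\leq \arg \leq \alpha$, and the maximum principle on the lune $\Omega_{a,\alpha}\cap\U$ (the two boundary vertices being polar and $\omega^*$ bounded) then gives (3.7). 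Two small remarks: the appeal is really to the elementary fact that a circle lies on one side of each of its tangent lines, rather than to the tangent--chord angle theorem per se; and one should exclude the degenerate case $b-a=2\pi$ (where $\gamma=\T$, $\omega^*\equiv -1$, and your $T_\gamma$ is ill-defined), though the lemma is trivial there.
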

Here ${\gamma}^{\circ}$ denote the interior of the arc $\gamma.$  We note that if $\gamma_0 \Subset \gamma$ in an arc with non empty interior, then there exist $\alpha=\alpha(\gamma_0,\gamma)>0$ such that $\Omega_{\tau, \alpha}$ is admissible for every $\tau \in \gamma_0.$

\section {Transversality of attached discs to a generic manifold.}

   It is clear that the  family of analytic discs  constructed above
 $$
 \Phi (c,t,\zeta) = X (c,t,\zeta) + i (h^{*} (c,t,\zeta) + t v (\zeta)),
  $$
  for $ (c,t) \in Q=Q_c \times Q_t, \zeta \in \bar \U$,
 satisfies the following properties:

 \begin{equation} \label{eq:PFE}
  X (c,t,\tau) = c - \Im \left( h \circ X (c,t,\tau) + t v(\tau) \right),  \,\,  (c,t) \in Q, \,\,  \tau \in \partial \U.
 \end{equation}

\begin{equation}
h^{^{*}}(c,t, \tau)=h \circ X (c,t, \tau), (c,t) \in Q, \tau \in
\partial \U
\end{equation}

\begin{equation}\begin{array}{ll}
X(c,0,\zeta)\equiv c, h^{^{*}}(c,0, \zeta)\equiv h(c) \,\,\,
\text{so
that} \\
\Phi(c,0, \zeta)\equiv c+ ih(c) \in M, c \in Q_c
\end{array}
\end{equation}

\begin{equation}
X(c,t,0)= \frac{1}{2\pi} \int_{T}X(c,t, \tau) d \tau \equiv c,
\,\, (c,t) \in Q.
\end{equation}

\begin{equation}
\left\| X \right\| \leq {O}(\left\| c \right\| + \left\| t
\right\|), \left\| D_\tau X \right\| \leq {O}( \left\| t
\right\|).
\end{equation}
Here and below $\left\| \cdot \right\| $  is Euclidean norm.

 The following geometric transversality property will be crucial for the proof of our main theorem.
 \begin{lem}  Let $\gamma_0\subset\subset \gamma$ an arc with non empty interior. Then for
 small enough $Q$  the attached disks $\Phi (c,t,\zeta) ,\,\, t\neq
 0,$ for $\zeta\rightarrow \tau \in \gamma_0$ meet $M$
 transversally.
 \end{lem}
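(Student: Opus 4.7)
Since $\Phi(c,t,\gamma)\subset M$, the tangential derivative $\partial_\theta\Phi(c,t,\tau)$ automatically lies in $T_{\Phi(c,t,\tau)}M$ for every $\tau\in\gamma_0$; the transversality claim therefore reduces to showing that the radial derivative $W:=\partial_r\Phi(c,t,\tau)\big|_{r=1}$ does not belong to $T_{\Phi(c,t,\tau)}M$. In the local coordinates $M=\{y=h(x)\}$ with $h(0)=0$ and $Dh(0)=0$, this is the inequality
$$\mathrm{Im}\,W\;\neq\;Dh\bigl(X(c,t,\tau)\bigr)\cdot\mathrm{Re}\,W. \qquad(\ast)$$
At $t=0$ we have $\Phi(c,0,\zeta)\equiv c+ih(c)$ constant in $\zeta$, so $W=0$ and $(\ast)$ is degenerate; the transversality must be extracted from the leading term in $t$.

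The heart of the argument is a first-order $t$-expansion of Bishop's solution. Differentiating equation $(4.1)$ in $t$ at $t=0$ gives $\partial_tX(c,0,\zeta)=-\tilde v(\zeta)\,\mathrm{Id}-\Im\bigl(Dh(c)\,\partial_tX(c,0,\cdot)\bigr)(\zeta)$, where $\tilde v=\Im v$ denotes the harmonic conjugate of $v$. Because $Dh(0)=0$, a Neumann iteration in the small parameter $c$ yields
$$X(c,t,\zeta)\;=\;c\,-\,t\,\tilde v(\zeta)\,+\,O\bigl(|t|(|c|+|t|)\bigr)$$
uniformly in $\zeta\in\ove\U$, with matching uniform bounds on derivatives supplied by the $C^{q-1}$-regularity in the theorem of Coupet recalled above. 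Using $h(0)=Dh(0)=0$, the boundary estimate $h\circ X(c,t,\cdot)=O\bigl((|c|+|t|)^2\bigr)$ on $\T$ gives $h^*(c,t,\zeta)=O\bigl((|c|+|t|)^2\bigr)$ uniformly on $\ove\U$, together with the tangential bound $\partial_\theta h^*(c,t,\tau)=Dh(X)\,\partial_\theta X=O\bigl(|t|(|c|+|t|)\bigr)$ from $(4.5)$.

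The boundary behaviour at $\tau\in\gamma_0$ is then governed by two simple facts about the scalar function $v$. First, since $v\equiv 0$ on the open upper arc containing $\gamma_0$, the tangential derivative $\partial_\theta v(\tau)=0$. Second, Hopf's boundary-point lemma applied to the nonnegative, nontrivial harmonic function $v$ in $\U$ vanishing on the open arc $\gamma$ gives $\partial_rv(\tau)\leq -c_0<0$ uniformly for $\tau\in\gamma_0\Subset\gamma$ by compactness. The Cauchy--Riemann equations at $r=1$ for the holomorphic components $\Phi_j=X_j+i(h^*_j+t_jv)$ read $\partial_rX_j=\partial_\theta(h^*_j+t_jv)$ and $\partial_r(h^*_j+t_jv)=-\partial_\theta X_j$; substituting the expansion of $X$ and using $\partial_\theta v(\tau)=0$ yields at $\tau\in\gamma_0$
$$\mathrm{Re}\,W=\partial_rX(c,t,\tau)=O\bigl(|t|(|c|+|t|)\bigr),\qquad \mathrm{Im}\,W=t\,\partial_rv(\tau)+O\bigl(|t|(|c|+|t|)\bigr).$$
Since $Dh(X(c,t,\tau))=O(|c|+|t|)$, this gives $\bigl|\mathrm{Im}\,W-Dh(X)\,\mathrm{Re}\,W\bigr|\geq c_0|t|-C(|c|+|t|)|t|$, which is strictly positive for every $t\neq 0$ once $Q$ is chosen small enough, uniformly in $\tau\in\gamma_0$. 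This proves $(\ast)$ and hence the transversality.

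The step I expect to require the most care is the passage from the boundary estimate $h\circ X=O((|c|+|t|)^2)$ on $\T$ to the uniform estimates for the tangential and radial derivatives of its harmonic extension $h^*$; this is where the $C^{q-1}$-regularity of the Bishop solution supplied by Coupet's theorem (with $q=2$ in the setting of the Main Theorem) and the boundedness of the harmonic-conjugate operator $\Im$ on $C^1(\T)$ away from the corners $\pm 1$ of the support of $v$ enter decisively.
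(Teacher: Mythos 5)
Your proof is correct and follows essentially the same strategy as the paper: estimate the radial derivative $W=\partial_r\Phi(c,t,\tau)$ at $\tau\in\gamma_0$ and show that $\mathrm{Im}\,W$ is of order $\|t\|$ while the competing terms are $O(\varepsilon\|t\|)$. The main added value is that you make the transversality criterion explicit as $(\ast)$ and verify both sides of it; the paper only computes $\mathrm{Im}\,D_{\vec{n}}\Phi=D_{\vec{n}}h^{*}+t\,D_{\vec{n}}v$ and lower-bounds it by $\|t\|(b-O(\varepsilon))$, leaving implicit that $Dh(X)\cdot\mathrm{Re}\,W=O(\varepsilon\|t\|)$ is indeed negligible (which follows from its estimates $(4.14)$--$(4.15)$, but is not said). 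You also justify $b=\inf_{\gamma_0}|D_{\vec{n}}v|>0$ by Hopf's boundary-point lemma, a fact the paper simply asserts. Your first-order $t$-expansion of the Bishop solution together with the Cauchy--Riemann relations is a cleaner bookkeeping of the same information that the paper records in the formulas $(4.12)$--$(4.16)$ surrounding this lemma; the one place where your version is genuinely more delicate is, as you correctly flag, the control of $\partial_r h^{*}$ via boundedness of the harmonic conjugate operator on $C^{1}(\T)$ away from the endpoints of $\mathrm{supp}\,v$, and here the $W^{q,p}$ framework of Coupet's theorem is the right tool to make that rigorous.
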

 \begin{proof} $\,\,$ For the normal derivative $D_{\mathop n\limits^ \to  } $
 at the points $\tau \in \gamma_0$ we have
 $$
 \text{Im} D_{\mathop n\limits^ \to  }\Phi(c,t,\tau)=D_{\mathop n\limits^ \to
 }h^*(c,t,\tau)+tD_{\mathop n\limits^ \to  }\upsilon(\tau)
 $$
 and
 $$
 \left|\text{Im} D_{\mathop n\limits^ \to  } \Phi(c,t,\tau)\right|
 \geq  \,\,\parallel { t}\parallel b\, -\,O(\varepsilon)\parallel {
 t}\parallel= \parallel { t}\parallel(b-O(\varepsilon)),
 $$
 where
 $$
 b := \mathop {\inf }\limits_{\gamma_0}  \left|
{D_{\mathop n\limits^ \to  } v\left( \tau  \right)} \right| > 0 \,\, \text{and}\,\,\, \varepsilon
=\text{sup}\left\{\left\|c\right\|+ \left\|t\right\|: \,\,c\in
Q_c,\,t\in Q_t \right\}.
$$

It follows, that for $O(\varepsilon) < \frac{b} {2}$
 \begin{equation}
 \left|\text{Im} D_{\mathop n\limits^ \to  }
\Phi(c,t,\tau)\right| \geq  \,\,\parallel { t}\parallel b/2\, \,\,\,\,
\forall \tau \in \gamma_0,
\end {equation}
i.e. the disks $\Phi(c,t,\zeta)$ meet $M$ for $\zeta\rightarrow \tau \in \gamma_0$ transversally.
\end{proof}

\begin{cor} Let $Q'=  \{ \parallel { t}\parallel=
  \sigma \} \subset Q_t ,$ where $\sigma> 0.$ Then there exist a neighborhood $\Omega\,' \supset \gamma_0$ and a constant
   $C>0$ such that
 \begin {eqnarray}
  d\,_\C (\zeta,\gamma_0) & \leq & C d_{\C^n} [\Phi(c,t,\zeta ),M] \,\,\,\\
  d_{\C^n} [\Phi(c,t,\zeta ), \Phi(c,t,\gamma_0 )] & \leq & C d_{\C^n} [\Phi(c,t,\zeta ),M],
  \nonumber
  \end{eqnarray}
  $\forall \,\,\zeta \in \Omega =\overline {\U \cap \Omega'},\,\, t\in Q' , c\in \overline{Q}_c.$
  Here $d\,_\C$ and $d_{\C^n}$ are Euclidean distances on $\C$ and $\C^n$, respectively.
 \end{cor}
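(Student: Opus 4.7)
The plan is to translate the normal-derivative lower bound of Lemma 4.1 into the two announced distance estimates. Working in the local coordinates where $M = \{(x,h(x)) : x \in B\}$ with $h(0)=0$ and $Dh(0)=0$, I introduce the \emph{vertical height}
\[
N(c,t,\zeta) := Y(c,t,\zeta) - h\bigl(X(c,t,\zeta)\bigr), \quad Y := h^{*} + tv,
\]
so that $\Phi = X+iY$ and $\Phi(c,t,\zeta)\in M$ precisely when $N(c,t,\zeta)=0$. Since $\|Dh\|_B$ can be made as small as desired by shrinking $B$, a routine computation gives the two-sided comparison
\[
c_0 |N(c,t,\zeta)| \;\le\; d_{\C^n}\bigl(\Phi(c,t,\zeta), M\bigr) \;\le\; |N(c,t,\zeta)|
\]
for $\Phi(c,t,\zeta)$ in a fixed neighborhood of $M$, so the first inequality of the corollary reduces to proving $|N(c,t,\zeta)| \ge c_1 \sigma\, d_\C(\zeta,\gamma_0)$.

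To obtain this bound I choose $\Om'$ to be the thin radial tube $\{re^{i\theta} : r\in(1-\delta,1+\delta),\ e^{i\theta}\in\gamma_0^{\circ}\}$; this avoids the portion of $\gamma$ outside $\gamma_0$ and ensures that for every $\zeta = re^{i\theta} \in \Om = \ove{\U\cap\Om'}$ the nearest-$\T$ projection $\pi(\zeta) := e^{i\theta}$ lies in $\gamma_0$ and realizes $d_\C(\zeta,\gamma_0) = 1-r$. By the boundary identity $h^{*}=h\circ X$ on $\partial\U$ and $v|_\gamma=0$, the function $N$ vanishes identically on $\gamma \supset \gamma_0$, so a first-order Taylor expansion around $\pi(\zeta)$ yields
\[
|N(c,t,\zeta)| \;\ge\; (1-r)\,|D_{\vec n} N(c,t,\pi(\zeta))| - O\bigl((1-r)^{2}\bigr).
\]
Writing $D_{\vec n}N = D_{\vec n}Y - Dh(X)\,D_{\vec n}X$, Lemma 4.1 gives $|D_{\vec n}Y| = |\mathrm{Im}\,D_{\vec n}\Phi| \ge \sigma b/2$ on $\gamma_0$. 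The remaining term is an error: the Cauchy--Riemann relation $D_{\vec n}X = \partial_\theta Y$ on $\partial\U$, combined with $\partial_\theta v=0$ on $\gamma$ and the estimate $\|\partial_\theta X\|=O(\|t\|)=O(\sigma)$, gives $|D_{\vec n}X| = |Dh(X)\,\partial_\theta X| = O\bigl((\|c\|+\|t\|)\sigma\bigr)$ on $\gamma_0$; using $Dh(0)=0$ and $\|X\|=O(\|c\|+\|t\|)$, we deduce $|Dh(X)\,D_{\vec n}X| = O\bigl((\|c\|+\|t\|)^{2}\sigma\bigr)$, which is absorbed by shrinking $Q$. Thus $|D_{\vec n}N| \ge \sigma b/4$ on $\gamma_0$, and taking $\delta$ small enough to swallow the quadratic remainder delivers the bound $|N(c,t,\zeta)| \ge c_1 \sigma\, d_\C(\zeta,\gamma_0)$, completing the proof of the first inequality.

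The second inequality follows at once from the $C^1$-bound on $\Phi$: choosing $\tau^*\in\gamma_0$ with $|\zeta-\tau^*|=d_\C(\zeta,\gamma_0)$,
\[
d_{\C^n}\bigl(\Phi(c,t,\zeta),\Phi(c,t,\gamma_0)\bigr) \;\le\; |\Phi(c,t,\zeta)-\Phi(c,t,\tau^*)| \;\le\; L\, d_\C(\zeta,\gamma_0),
\]
and combining with the first inequality closes the argument. The main technical obstacle is ensuring all constants are uniform over $(c,t)\in\ove{Q}_c\times Q'$ and $\zeta\in\Om$; this is handled by fixing $\sigma>0$ first (so that the transversality lower bound $\sigma b/2$ from Lemma 4.1 is a positive constant independent of everything else), and only then shrinking $Q$ together with the tube width $\delta$ to make the higher-order errors negligible compared to this fixed main term.
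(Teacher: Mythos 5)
Your argument is correct and follows essentially the same route as the paper, which merely asserts that the estimates (4.7) ``clearly follow from (4.6)'' at each fixed $(c^0,t^0)$ and then appeals to compactness of $\overline{Q}_c\times Q'$ for uniformity of the constant. You supply the details the paper omits: the reduction via the height function $N=Y-h(X)$, the first-order expansion of $N$ at points of $\gamma_0$, and the absorption of the error terms, all of which are the natural unpacking of that one-line appeal to transversality.
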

  \begin{proof}  The statement clearly follows from (4.6), because for every fixed $t^0\in
   Q',\,\, c^0 \in \overline Q_c$ we can write  (4.7), which then will be  true in some neighborhoods $B_c \ni c^0,\, B_t \ni
  t^0.$
\end{proof}

\begin{lem}  For every $\Omega'\supset
\gamma_0\,$ and for every $Q'_t=  \{\parallel { t}\parallel
=\sigma \} \subset Q_t ,$ $\sigma > 0$ small enough the
 closed set $\,\,W=\{ \Phi(c,t,\zeta ) \in \C^n : c\in \overline Q_c,\, t\in Q'_t, \zeta \in \Omega =\overline {\U  \cap \Omega'} \}$
 contains the point $0\in M$ in its interior in $\C^n$ i.e. $0\in \dot{W}$.
 \end{lem}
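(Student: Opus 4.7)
The plan is to show openness of the smooth map $\Psi : \bar Q_c \times Q'_t \times \Omega \to \C^n$, $\Psi(c,t,\zeta) := \Phi(c,t,\zeta)$, at a point whose image is close to $0$. The parameter space has real dimension $n + (n-1) + 2 = 2n+1$, one more than the target, so the expected picture is that $\Psi$ is a submersion near suitable points and hence $W = \Psi(\bar Q_c \times Q'_t \times \Omega)$ has non-empty interior.

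First I verify that $0 \in W$. For any $\tau^* \in \gamma_0^\circ$ and any $t^* \in Q'_t$, the identity $h^*(c,t,\tau^*) = h(X(c,t,\tau^*))$ from (4.2) together with $v(\tau^*) = 0$ gives $\Phi(c, t^*, \tau^*) = X + i h(X) \in M$. By (4.3)--(4.5), the map $c \mapsto X(c, t^*, \tau^*)$ is an $O(\sigma)$-perturbation in $C^1$-norm of the identity on $\R^n$ (it equals the identity when $t=0$), and differentiating Bishop's equation gives $\partial_c X = I + O(\sigma)$; the implicit function theorem then yields $c^* \in \bar Q_c$ close to $0$ with $X(c^*, t^*, \tau^*) = 0$, hence $\Phi(c^*, t^*, \tau^*) = 0$.

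To promote this to an interior point of $W$, I move $\zeta$ slightly into $\U$ and compute the differential. Take $\zeta^* \in \U \cap \Omega'$ of the form $\zeta^* = (1-\sigma)\tau^*$, so that by the Poisson formula $v(\zeta^*) \sim \sigma$ (since $v$ vanishes on $\gamma$ and is positive on the lower semicircle, its harmonic extension is linearly small in the distance to $\tau^*$). Three contributions to $d\Phi(c^*, t^*, \zeta^*)$ combine to give rank $2n$. First, $\partial_c \Phi \approx I$ spans the real $\R^n \cong T_0 M$. Second, differentiating Bishop's equation at $c=0$, $t=0$ gives $\partial_t \Phi \cdot \delta t \approx [-\Im v(\zeta^*) + i\, v(\zeta^*)]\, \delta t$, whose imaginary component $v(\zeta^*)\delta t$, restricted to the $(n{-}1)$-dimensional tangent space of $Q'_t$ at $t^*$, supplies an $(n{-}1)$-dimensional subspace of the imaginary $\R^n$ perpendicular to $t^*$. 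Third, the radially-inward part of $\partial_\zeta \Phi$ has, by the transversality Lemma 4.1, an imaginary component of size $\gtrsim \sigma$ in the direction of $t^*$. Together these directions span $\C^n$, so $d\Psi(c^*, t^*, \zeta^*)$ is surjective, and the submersion form of the inverse function theorem yields an open image neighborhood of $\Psi(c^*, t^*, \zeta^*)$.

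The main obstacle is the quantitative matching: both the residual $\|\Psi(c^*, t^*, \zeta^*)\|$ and the smallest singular value of $d\Psi$ scale with $\sigma$, so one must check that the former is strictly dominated by the latter. A short bookkeeping based on (4.5), together with $Dh(0)=0$ and the maximum principle for the harmonic extension $h^*$, gives $X(c^*, t^*, \zeta^*) = O(\sigma^2)$, $h^*(c^*, t^*, \zeta^*) = O(\sigma^2)$ and $t^* v(\zeta^*) = O(\sigma^2)$, hence $\|\Psi(c^*, t^*, \zeta^*)\| = O(\sigma^2)$; meanwhile the weakest singular value of $d\Psi$ (coming from the imaginary $v(\zeta^*)\delta t$ and the transversal $\partial_\zeta \Phi$ contributions) is $\gtrsim \sigma$, so the quantitative inverse function theorem produces an image ball of radius $\gtrsim \sigma$ around $\Psi(c^*, t^*, \zeta^*)$. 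For $\sigma$ sufficiently small, $\sigma^2 \ll \sigma$, so $0$ lies strictly inside this image ball, proving $0 \in \dot W$.
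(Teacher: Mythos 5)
Your strategy (a direct, quantitative openness argument at an explicitly chosen point, instead of the paper's contradiction at a hypothetical boundary point of $W$) is legitimate in principle, but the decisive final step does not close. The claim that ``the quantitative inverse function theorem produces an image ball of radius $\gtrsim \sigma$ around $\Psi(c^*,t^*,\zeta^*)$'' is unsupported: your own lower bound on the least singular value of $d\Psi$ is only $\asymp\sigma$ (coming from $v(\zeta^*)\asymp\sigma$ in the $t$-sphere directions and $\|t^*\|\,D_{\vec n}v\asymp\sigma$ in the $\zeta$-direction), and this bound, together with the validity of the linearization, survives only on a parameter ball of radius $O(\sigma)$ — the $t$-sphere itself has radius $\sigma$, moving $\zeta$ by more than a multiple of $\sigma$ destroys $v(\zeta)\asymp\sigma$, and $d\Psi$ varies at unit rate while its weakest singular value is $\asymp\sigma$. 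Any Graves-type quantitative surjectivity theorem then yields an image ball of radius $O(\sigma)\cdot O(\sigma)=O(\sigma^2)$, not $\gtrsim\sigma$. On the other side, the residual is genuinely of order $\sigma^2$, not smaller: the component of $\Psi(c^*,t^*,\zeta^*)$ along $i\,t^*/\|t^*\|$ contains $\|t^*\|v(\zeta^*)\approx D_{\vec n}v(\tau^*)\,\sigma^2$, and $X(c^*,t^*,\zeta^*)$ is only $O(\sigma^2\log(1/\sigma))$ (the Poisson integral of $|\tau-\tau^*|$ at $(1-\sigma)\tau^*$ produces a logarithm), so the honest comparison is $\sigma^2$ against $\sigma^2$ (with a log working against you), and the concluding ``$\sigma^2\ll\sigma$'' compares the residual with a radius you never established.

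There is also a gap in the singular-value bound itself: $\partial_t\Phi\approx -\Im v(\zeta^*)\,\delta t+i\,v(\zeta^*)\,\delta t$ has a real part of size $O(1)$, which must be cancelled by $\delta c$-moves before the imaginary part can be used; that cancellation feeds $\partial_c\,\mathrm{Im}\,\Phi=\partial_c h^*=O(\sigma)$ (with constant proportional to $\Vert D^2h\Vert$) back into the imaginary components — the same order as the main term $v(\zeta^*)\,\delta t$ — so ``$\gtrsim\sigma$'' additionally requires $|Dh|$ along the discs to be small relative to the constants $b$, $D_{\vec n}v(\tau^*)$ of $v$ (a normalization the paper buries in its $O(\varepsilon)$ bookkeeping). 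Note how the paper's proof is built precisely to avoid your main difficulty: it assumes $0\in\partial W$, picks a nearby point $p^0\in\partial \dot{W}\setminus M$ with $p^0=\Phi(c^0,t^0,\zeta^0)$, $\zeta^0\in\U\cap\Omega'$, and shows the restriction of $\Phi$ to the slice $c_n=c_n^0$, $\Vert t\Vert=\sigma$ has nonvanishing $2n\times 2n$ Jacobian there (block computation with $D_{11}D_{22}\approx v^{n-1}(\zeta)$ and $D_{33}=|d(X_n+iY_n)/d\zeta|^2\gtrsim(\sigma b/2)^2$), so $p^0$ would be interior to $W$ — a contradiction. That scheme needs only pointwise local surjectivity and never a quantitatively large image ball; to repair a direct argument you would need either such a contradiction device or a covering/degree argument producing a full neighborhood, not a single application of the quantitative inverse function theorem at one point.
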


 \begin{proof} By (4.3)  $X(c,t,\zeta) \equiv c$ if $t=0.$ Since
 $X$ is smooth, then for small enough fixed $t^0$ and for arbitrary
 fixed $\tau^0 \in \gamma_0$  the image $X(c,t^0,\tau^0):\, c\in Q_c
 $ contains $0\in \R^n.$ It follows, that $0\in W.$ Moreover, $\dot{W} \neq \emptyset$  and if for some
  $\|t^0\|\leq \sigma,\, \zeta^0 \in \ove U $
 \begin{equation}
 x(c, t^0, \zeta^0) \in \frac{1}{2} Q_c,  \,\,\text{then} \,\, c\in Q_c
 \end{equation}

 Now we assume by contradiction that $0\in \partial W.$ Then  $\C^n \setminus W$ is open and contains $0$ on its boundary.
 It is clear, that near $0$ there exists a  point $p^0 =(x^0,y^0)\in {\partial \dot{W}\setminus M}$ such,
 that $ x^0 \in \frac{1}{2} Q_c$ and  $p^0= \Phi(c^0,t^0,\zeta^0)$ for some $\,\,c^0\in \ove{Q}_c, \,\,
 \parallel { t^0}\parallel
  =\sigma, \,\,\zeta^0\in \Omega' \cap U.$

 For simplicity we may assume that $ t^0=(0,...,0,\sigma)$  and set
  $ 'c = \left(c_1 ,...,c_{n - 1}\right),\,\,'t = \left(t_1 ,...,t_{n - 1}\right)$.
 From  (4.8) it follows also, that $c\in Q_c.$

We consider the transformation
\begin{eqnarray}
S \left({'c} ,{'t},\zeta\right) = \Phi
\left({'c},c_n^0,{'t},t_n^0,\zeta\right): \,\, {'Q} \times
\overline \U  \longrightarrow  \C^n,
\end{eqnarray}
where $ 'Q :=  \{z\in Q: \,\,c_n = c_n^0, t_n = t_n^0\} \subset
\R^{2n - 2}.$

 Then $ S\left( {'c^0,'t^0 ,\zeta ^0}\right) = p^0$ and its Jacobian is given by
  $$
  J('c,'t,\zeta)=\hat{J}(c,t,\zeta)|_{c_n=c^0_n, t_n=t^0_n},
  $$
 where
 $$
\hat{J}(c,t,\zeta) = \left| {\begin{matrix} {\frac{{\partial X_1
}} {{\partial c_1 }}} & {...} & {\frac{{\partial X_{n - 1} }}
{{\partial c_1 }}} & | & {\frac{{\partial Y_1 }} {{\partial c_1
}}} & {...} & {\frac{{\partial Y_{n - 1} }} {{\partial c_1 }}} & |
& {\frac{{\partial X_n }} {{\partial c_1 }}} & {\frac{{\partial
Y_n }}
{{\partial c_1 }}}  \\
   {...} & {...} & {...} & | & {...} & {...} & {...} & | & {...} & {...}  \\
   {\frac{{\partial X_1 }}
{{\partial c_{n - 1} }}} & {...} & {\frac{{\partial X_{n - 1} }}
{{\partial c_{n - 1} }}} & | & {\frac{{\partial Y_1 }} {{\partial
c_{n - 1} }}} & {...} & {\frac{{\partial Y_{n - 1} }} {{\partial
c_{n - 1} }}} & | & {\frac{{\partial X_n }} {{\partial c_{n - 1}
}}} & {\frac{{\partial Y_n }}
{{\partial c_{n - 1} }}}  \\
   { -  - } & { -  - } & { -  - } & | & { -  - } & { -  - } & { -  - } & { -  - } & { -  - } & { -  - }  \\
   {\frac{{\partial X_1 }}
{{\partial t_1 }}} & {...} & {\frac{{\partial X_{n - 1} }}
{{\partial t_1 }}} & | & {\frac{{\partial Y_1 }} {{\partial t_1
}}} & {...} & {\frac{{\partial Y_{n - 1} }} {{\partial t_1 }}} & |
& {\frac{{\partial X_n }} {{\partial t_1 }}} & {\frac{{\partial
Y_n }}
{{\partial t_1 }}}  \\
   {...} & {...} & {...} & | & {...} & {...} & {...} & | & {...} & {...}  \\
   {\frac{{\partial X_1 }}
{{\partial t_{n - 1} }}} & {...} & {\frac{{\partial X_{n - 1} }}
{{\partial t_{n - 1} }}} & | & {\frac{{\partial Y_1 }} {{\partial
t_{n - 1} }}} & {...} & {\frac{{\partial Y_{n - 1} }} {{\partial
t_{n - 1} }}} & | & {\frac{{\partial X_n }} {{\partial t_{n - 1}
}}} & {\frac{{\partial Y_n }}
{{\partial t_{n - 1} }}}  \\
   { -  - } & { -  - } & { -  - } & | & { -  - } & { -  - } & { -  - } & | & { -  - } & { -  - }  \\
   {\frac{{\partial X_1 }}
{{\partial \zeta '}}} & {...} & {\frac{{\partial X_{n - 1} }}
{{\partial \zeta '}}} & | & {\frac{{\partial Y_1 }} {{\partial
\zeta '}}} & {...} & {\frac{{\partial Y_{n - 1} }} {{\partial
\zeta '}}} & | & {\frac{{\partial X_n }} {{\partial \zeta '}}} &
{\frac{{\partial Y_n }}
{{\partial \zeta '}}}  \\
   {\frac{{\partial X_1 }}
{{\partial \zeta ''}}} & {...} & {\frac{{\partial X_{n - 1} }}
{{\partial \zeta ''}}} & | & {\frac{{\partial Y_1 }} {{\partial
\zeta ''}}} & {...} & {\frac{{\partial Y_{n - 1} }} {{\partial
\zeta ''}}} & | & {\frac{{\partial X_n }} {{\partial \zeta ''}}} &
{\frac{{\partial Y_n }}
{{\partial \zeta ''}}}  \\
 \end{matrix} } \right|,
$$
Here $\zeta= \zeta' + i \zeta''$ and $
 Y_k (c,t,\zeta) = h_k^* \circ X\left( c ,t,\zeta \right) + t_k v\left(
\zeta\right), \ \ k = 1, \cdots n . $

The determinant $J$,  is composed by $9$ block matrices $D_{ij}
\,\,,\,\,i,j = 1,2,3$.

We will show that $ J ('c^0,'t^0,\zeta^0) \neq 0$, which will imply that  the operator  $S$ is a local
 diffeomorphism in a neighborhood of the point $\left( {'c}^0 ,{'t}^0 ,\zeta^0\right)\,$.

   Indeed, by (4.3)  $
X\left(c,0,\zeta\right) \equiv c\,\,,\,\,h^{*}\left( {c,0,\zeta}
\right) \equiv h\left( c \right) $ and then

$$
 \left| {\begin{matrix}
  {D_{1 1}} &  {D_{1 2}}  \\
   {D_{2 1}} & D_{2 2}  \\
  \end{matrix} } \right|_{(c,0,\zeta)} =  D_{11} .D_{22}  = v^{n - 1} \left( {\zeta } \right)
$$
and
\begin{equation}
\left| {\begin{matrix}
  {D_{1 1}} &  {D_{1 2}}  \\
   {D_{2 1}} & D_{2 2}  \\
  \end{matrix} } \right|_{(c,t,\zeta)} =  v^{n - 1} \left( {\zeta } \right) +
  O\left(\varepsilon\right),
  \end{equation}
  where we recall that $\varepsilon
={sup}\left\{\left\|c\right\|+ \left\|t\right\|: \,\,c\in
Q_c,\,t\in Q_t \right\} .$
 Note also that

$$
D_{3 3} =  \left| {\begin{matrix}
  {\frac{{\partial X_n }}{{\partial \zeta' }}} &  {\frac{{\partial Y_n }}
{{\partial \zeta'}}}  \\
   {\frac{{\partial X_n }}
{{\partial \zeta''}}} & {\frac{{\partial Y_n }}
{{\partial \zeta'' }}}  \\
  \end{matrix} } \right| = \left|\frac{ d }{d \zeta}(X_n + i Y_n)\right|^2.
$$
 Now consider the right hand side near the arc $\gamma $. It is clear that for every $s > 0,$ there is an
 open set $\tilde{\Omega} \supset \gamma_0$ such that
 \begin{equation}
 \left|\frac{ d }{d \zeta}(X_n + i Y_n) (c,t,\zeta) \right|^2 \geq \left| D_{\tau} X_n (c,t,\tau)\right|^2 -
 s,
 \forall \zeta \in U \cap \tilde{\Omega}, \tau \in \gamma_0.
 \end{equation}

   We calculate  $
D_\tau  X\left( {c,t,\tau } \right)\,\,,\,\,$ for $\left({c,t} \right)
\in Q\,\,,\,\,\tau  \in T,$
 \begin{equation}
D_\tau  X\left( {c,t,\tau } \right) =  - D_\tau  \Im h \circ
X\left( {c,t,\tau } \right) - tD_\tau  \Im v\left( \tau  \right)
\end{equation}

Since, $ D_\tau  \Im v\left( \tau  \right) = D_{\mathop n\limits^
\to  } v\left( \tau  \right) $, where   $ D_{\mathop n\limits^ \to
} $
  is the normal derivative $
\mathop n\limits^ \to $, then (4.12) implies
$$
D_\tau  X\left( {c,t,\tau } \right) + tD_{\mathop n\limits^ \to  }
v\left( \tau  \right) = \Im D_\tau  h \circ X\left( {c,t,\tau }
\right).
$$
For   $k$-coordinate of  vector $X\left( \tau  \right) = X\left(
{c,t,\tau } \right) $
 we have
 \begin{equation}\begin{array}{ll}
\left\| {D_\tau  X_k \left(c,t, \tau  \right) + t_{k}D_{\mathop
n\limits^ \to  } v\left( \tau  \right)} \right\| = \left\| {\Im
D_\tau  h_{k} \circ X\left(c,t, \tau  \right)}
\right\|  \leqslant \\
 \leqslant {const}\left\|
{D_\tau  h_{k} \circ X\left(c,t, \tau  \right)} \right\| \leqslant
O\left( \varepsilon  \right)\left\| {D_\tau X\left(c,t, \tau
\right)} \right\|.
\end{array}
\end{equation}

Therefore,
\begin{equation}\begin{array}{ll}
\left| {t_k D_{\mathop n\limits^ \to  } v\left( \tau \right)}
\right| - O\left( \varepsilon  \right)\left\| t \right\| \leqslant
\left| {D_\tau  X_k \left( {c,t,\tau } \right)} \right| \leqslant
 \\ \leqslant \left| {t_k D_{\mathop n\limits^ \to  } v\left( \tau
\right)} \right| + O\left( \varepsilon  \right)\left\| t
\right\|\,\,,\,\,1 \leqslant k \leqslant n\,\,,\,\,\tau  \in T
\end{array}
\end{equation}

The second part of (4.14) implies
 \begin{equation}
\left\| {D_\tau  X\left( {c,t,\tau } \right)} \right\| \leqslant
C\left\| t \right\|\,\,,\,\,\left( {c,t,\tau } \right) \in Q
\times T\,\,,\,\,C - \text{constant}
\end{equation}

As in Lemma 4.1 if  $b = \mathop {\inf }\limits_{\gamma_0} \left|
{D_{\mathop n\limits^ \to  } v\left( \tau  \right)} \right|
> 0 $ and $O(\varepsilon) < \frac{b} {2},$
  then
 the first part of $(4.14)$ implies
\begin{equation}
 \vert D_{\tau} X_k (c,t,\tau)\vert \geq \vert t_k\vert b - \Vert t\Vert b \slash 2,
\end{equation}
for $\tau \in \gamma, 1 \leq k \leq n$.

 By $(4.10)$ and $(4.11)$  it follows that
 \begin{eqnarray*}
 \left|J ('c,'t,\zeta)\right| &= & \left|D_{1 1}|\cdot |D_{2 2}\right| \cdot \left|\frac{ d }{d \zeta}(x_n + i y_n)
  ('c,'t,\zeta)\right|^2 + O (\varepsilon) \\
 &\geq & \left[v^{n - 1} (\zeta) + O (\varepsilon)\right] \cdot \left[|t_n b \slash 2|^2
   - s \right] + O (\varepsilon),
 \end{eqnarray*}
 for all $('c,'t,\zeta) \in {'Q} \times \left[\U \cap \Omega'\right],$ because $\parallel { t^0}\parallel =|t^0_n|.$

 We can take $ \tilde{\Omega} \cap \Omega'$ instead of $\Omega $ and observe that all functions $O (\cdot)$ do not
 depend  on $\zeta.$
 Therefore if we take $\varepsilon,  s$ small enough, then $ \left|J ('c^ 0,'t^ 0,\zeta^ 0)\right| > 0.$

 Since, the plane $\{t_n=t^0_n\}$   is tangent to the sphere $\|t\|=\sigma$ at the point $t^0$,  the Jacobian of the
 restriction
  $ \breve{S}=\Phi \left('c, c^0_n, \sqrt{|t_1|^2+...+|t_{n-1}|^2}, \zeta \right)$ also
  is not zero at the point $('c^ 0,'t^ 0,\zeta^ 0).$
 In particular, the operator $$
 \breve{S}: \,\,U_1 \times U_2 \times U_3 \rightarrow U (p)
 $$
is a homeomorphism, where $ U_1 \subset \R^{n-1}-$ a neighborhood
of the point $'c^0$,  $ U_2 \subset Q'_t-$ a neighborhood of  $t^0
\in Q'_t$ and $U_3= \{\vert \zeta - \zeta_0\vert <
\sigma'\}\subset \Omega, \sigma'
> 0,$ is a neighborhood of $\zeta_0$.  It follows, that the open set $U(p) \subset W,$ that is contradiction to
$p \in \partial W. $
\end{proof}

\section  {Proof of the main Theorem}

First we observe that from the results of Edigarian-Wiegerinck [EW10]
and the authors [SZ12], it follows that $M  \subset \C^n$ is a
pluriregular set.  Indeed, it was proved in [SZ12] that a set of
full measure in a generic manifold $M$ is not thin. Since the set
$P=\{z\in M: V^*_M(z)>0 \}$, where $V^*_M(z)$ is Green function, is pluripolar by Bedford and Taylor ([BT82]), it
has zero-measure (see [Sa76, C92]) and then  the set $M \setminus
P$ is not thin. Therefore $V^*_M\equiv 0$ on $M$, i.e. $M$ is
pluriregular. Note that in [EW10] non-thinness of $M\setminus P$
was proved for $C^1$-smooth manifold $M$ and for a pluripolar set
$P\subset M$, which implies that an arbitrary $C^1$-smooth generic
manifold is pluriregular.

 Our main theorem will be a consequence of the following result, thanks to Lemma  2.3.

 \begin{thm}
 Any  $C^2$-smooth generic submanifold  $M \subset \C^n$ is $\Lambda_1$-pluriregular.
 \end{thm}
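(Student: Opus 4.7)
The plan is to establish the $\Lambda_1$-estimate locally near each point of $M$ and then cover a given compact $K \subset M$ by finitely many such neighborhoods. Fix $p^0 \in M$; after translation assume $p^0 = 0$ and choose local coordinates so that $M = \{z = x + iy : y = h(x)\}$ with $h(0) = 0$, $Dh(0) = 0$, as in Section 3. Construct the Bishop family of attached analytic discs $\Phi(c,t,\zeta)$ on $\overline{Q}_c \times Q_t' \times \Omega$, where $\Omega = \overline{\U \cap \Omega'}$ is a sector-shaped neighborhood of a subarc $\gamma_0 \Subset \gamma$. By Lemma 4.3, for suitably small parameters the image $W$ of $\Phi$ is a closed neighborhood of $0$ in $\C^n$.

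The key step is the composition trick. Given $z \in W^\circ \setminus M$, pick $(c,t,\zeta)$ with $z = \Phi(c,t,\zeta)$ and $\zeta \in \Omega \cap \U$, and set $u(\zeta) := V_M^*(\Phi(c,t,\zeta))$. Since $M$ is pluriregular, as noted at the start of Section 5, $V_M^*$ is continuous on $\C^n$ and vanishes on $M$. Therefore $u$ is subharmonic on $\U$, continuous on $\overline{\U}$, vanishes on $\gamma$ (because $\Phi(c,t,\gamma) \subset M$), and is bounded by a constant $M_0$ depending only on the compact image of the parametrization. Comparing boundary values on $\T$ with the harmonic majorant built from the harmonic measure of $\gamma$, the maximum principle yields
$$
u(\zeta) \leq M_0 \bigl(1 + \omega^*(\zeta,\gamma,\U)\bigr), \quad \zeta \in \U.
$$

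Now I chain two linear estimates. Since $\gamma_0 \Subset \gamma$ stays away from the endpoints of $\gamma$, Lemma 3.1 tells us that $1 + \omega^*(\cdot,\gamma,\U)$ is Lipschitz near $\gamma_0$ and vanishes on $\gamma_0$; after shrinking $\Omega'$ if necessary,
$$
1 + \omega^*(\zeta,\gamma,\U) \leq C_1\, d_\C(\zeta,\gamma_0), \quad \zeta \in \Omega \cap \U.
$$
Corollary 4.2 gives $d_\C(\zeta,\gamma_0) \leq C_2\, d_{\C^n}(\Phi(c,t,\zeta),M)$. Combining,
$$
V_M^*(z) \leq M_0 C_1 C_2\, d_{\C^n}(z,M), \quad z \in W^\circ,
$$
the case $z \in M$ being trivial. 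Finally, a finite cover of $K$ by such local neighborhoods $W_j^\circ$ produces a uniform constant $A = A_K$ and open neighborhood $O_K := \bigcup_j W_j^\circ$ of $K$; since $K \subset M$ implies $d_{\C^n}(z,M) \leq d_{\C^n}(z,K)$, this is exactly the $\Lambda_1$-pluriregularity of Definition 2.2.

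The main obstacle lies in coordinating the two geometries. The sector $\Omega'$ must be chosen narrow enough that every $\zeta \in \Omega \cap \U$ lies in an admissible sector at some interior point of $\gamma_0$, so that Lemma 3.1 delivers a genuine Lipschitz bound and not only the weaker inequality (3.7); simultaneously $\Omega'$ must be compatible with the transversality estimates of Corollary 4.2 and with the openness of $W$ furnished by Lemma 4.3. A further point to address is that the Bishop construction of Section 3 is set up for totally real submanifolds of minimal dimension $n$, so for a generic submanifold of higher dimension one should first reduce to an $n$-dimensional totally real slice before running the argument above.
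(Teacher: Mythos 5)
Your proposal is correct and follows essentially the same route as the paper's own proof: Bishop family of attached discs, two-constants theorem applied to the composition $V^*\circ\Phi$, the Lipschitz estimate for the harmonic measure $\omega^*(\cdot,\gamma,\U)$ from Lemma 3.1, the transversality bound of Corollary 4.2, and a finite cover at the end. The only cosmetic difference is that the paper works with the localized Green function $V_{M_p}^*$ of $M_p := M\cap B(p)$ (so that the bound $C'' = \max_{B(p)} V_{M_p}$ and the vanishing of $V_{M_p}$ on $M_p$ are immediate) and then uses $V_M \le V_{M_p}$ at the end, whereas you apply the argument directly to $V_M^*$; the paper also carries out the reduction to a totally real $n$-dimensional slice explicitly at the outset, a point you flag but defer.
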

 \begin{proof}
 We first reduce to the case of a totally real submanifold.
 Fix a point, say $z^0 = 0 \in M$.
 Changing holomorphic coordinates in $\C^n$, we can assume that the tangent space $T_0 M$,
 which by definition does not contain any complex
 hyperplane, can be written as
 $$
 T_0 M = \{z = x + i y \in \C^n : y_1 = \cdots = y_{2 n - m} = 0\}.
 $$
 Hence for a small neighborhood $G = G_1 \times G_2$ of the origin with
 $$G_1 =\{ (x,y'') = (x,y_{2 n - m + 1}, ... , y_n) \in \R^n \times \R^{m - n} : \vert x\vert \leq \delta,
 \vert y''\vert < \delta \},$$
 $$G_2 =\{ y' = (y_1, \cdots, y_{2n - m}) \in \R^{2 n - m} : \vert y'\vert < \delta\},$$

 we can represent $M$ as a graph
 $$
  M \cap G = \{z  \in G :  y' = h (x,y'') \},
 $$
 where $h$ is $C^2$ smooth  mapping from $G_1$ into $G_2$.

 Observe that  for each small enough $y''_0$ the intersection $M\cap\Pi\{y''_0\}$ of $M$ with the plane
 $\Pi\{y''_0\} := \{ z \in \C^n : y'' = y''_0\}$ is
 an $n-$dimensional generic manifold. Moreover, since the Green function is monotonic, i.e.
  $V(z,E_1)\geq V(z,E_2)$ for $E_1 \subset E_2,$ it is enough to prove the theorem in the case when $M$ is generic of dimension $n$, hence totally real of dimension $n$.

  In this case, we show the local H\"older pluriregularity of $M$, using previous results from Section 4. Fix  a point $p  \in M$, a ball $B(p) = B_x \times B_y \subset \C^n$ centered at the point $p$
such that $ M_p := M \cap B (p)$ is the graph of a $C^2$-smooth
function. Then by Corollary 4.2, for arbitrary fixed
small $\sigma >0$  there exist a neighborhood   $\Omega' \supset
\gamma_0$ and a constant $C>0$, depending on the point $p$, such
that the inequalities (4.7) hold. By Lemma 4.3  $\,\, O (p) \ni p$, where $O (p) = W^0$ is the interior  of the set
$W=W(p,\Omega', \sigma),$ constructed in  Lemma 4.3.

Fix  a point  $z^0 \in O (p) \setminus M $ and a disk
   $ \Phi(c,t,\zeta ):\,\,\Phi(c^0,t^0,\zeta^0)=z^0,\,\,$ with $c^0 \in \overline Q_c,\, t^0\in Q'_t, \zeta^0 \in \U \cap
   \Omega'$. Then the function $V_{M_p}\circ \Phi(c^0,t^0,\zeta) \in sh (\U)$ and
   $V_{M_p}\circ \Phi|_\gamma\equiv 0$. Let $C'' =\mathop {\max
}\limits_{B (p)} V_{M_p}(z) <\infty.$ By the theorem of two constants we have
\begin{equation}
 V_{M_p}\circ \Phi (c^0,t^0,\zeta)\leq
 C'' [\omega^*(\zeta,\gamma,\U)+1],\,\,\zeta\in \U.
\end{equation}

Therefore the first part of  Lemma 3.1 and (4.7) yields the
inequality

$$ V_{M_p}(z^0) = V_{M_p} \circ \Phi (c^0,t^0,\zeta^0)\leq C'' \left[\omega^*(\zeta^0,\gamma,\U)+1)\right] \leq  $$
\begin{eqnarray}
 \leq C' C'' d_{\C}(\zeta^0,\gamma_0)\leq C_p d_{\C^n}(z^0,M_p),
\end{eqnarray}
for all $z^0 \in O(p)$, where $C_p := C C' C''$ depends on the
fixed point $p \in M$ and on the corresponding family of analytic
discs, attached to $M$ locally, in a neighborhood of $p.$

Now given a compact set $K \subset M$ we can apply the previous estimate to each point of $K$. Then by compactness we
 can find a finite number of points $p_1, \cdots p_k$ of $K$, a finite number of balls  balls $B (p_1), \cdots, B(p_k)$ and a finite numbers of open sets $ O (p_1), \cdots O (p_k)$ such that

 \begin{eqnarray*}
V_{M_p} (z) \leq C_p d_{\C^n}(z,M_p),
\end{eqnarray*}
for any $z \in O (p)$ and $p = p_1, \cdots , p_k$.
Now observe that $O = \cup_{1 \leq i \leq k} O (p_i)$ is a neighborhood of $K$ and shrinking a little bit the open
sets $O_p$ we can assume that
for any  $p=p_i$ and $z \in O_p$ , $d_{\C^n}(z,M_p)  \leq  d (z, M).$
Since $V_M \leq V_{M_p}$, it follows that
$
V_{M} (z) \leq A  d_{\C^n}(z,K),
$
for any $z \in O$.$\vartriangleright$
\end{proof}

\section{Open problems }

Let $D\subset M$ a domain with $C^1$-smooth boundary $\partial D.$
   Lemma 4.2 states that a neighborhood of the generic manifold locally consists in the  interior $\dot{W}$ of the set
 $W=\{ \Phi(c,t,\zeta ): c\in \overline Q_c,\, t\in Q'_t, \zeta \in \Omega =\overline {\U  \cap \Omega'} \}.$
 It seems clear, at least intuitively, that if here, instead of $\Omega$, we take its part $\Omega_{a,\alpha},\,\,\alpha>0$ (see Lemma 3.1), then
 we should see that $\dot{W}$ contains some wedge $$\left\{z\in \C^n: \,\, d_{\C^n}\left(z, M \right)< C_\alpha \cdot d_{\C^n}\left(z, \partial D
 \right)\right\},$$ where $C_\alpha >0$ is a constant. If this is true then we could prove: {\it
  arbitrary  close $C^1$-domain $\overline D$ in $C^2$-smooth generic manifold is pluriregular, i.e. the Green function $V^*(z, \overline
  D)$ is continuous  in} $\, \C^n.$
  The proof easily follows  by the well-known criteria of pluriregularity (see [Sa80]) and by the following lemma.
\vskip 0.3 cm
 \textbf {Lemma 6.1.} {\it If $f(\lambda)$ is a $C^1$-smooth funstion on $[0,1] \subset \R$, then for every $\varepsilon >0$ there exist polynom $p(\lambda)$ such that}
 $$
   | p(\lambda)-f(\lambda)| \leq \varepsilon \lambda,\,\, \lambda\in [0,1].
 $$

  The authors do not know any proof of the following

 \vskip 0.3 cm
\textbf {Conjecture:} Let $ D \subset M $
 be a bounded domain in $M$ with smooth boundary. then $\overline{D}  \subset \C^{n}$ is $\Lambda_{1 \slash 2}\,$-pluriregular i.e.
  its pluricomplex Green function $V(z,\overline{D})$ is H\"older continuous of order $1 \slash 2$ in $\C^n$.
 \vskip 0.3 cm
  We note  that if $M$ is real analytic generic manifold then the conjecture is true.

                     \begin{center}\textbf{  References}

                          \end{center}
\vskip 0.3 cm

 \noindent [B65] E.Bishop: {\it Differentiable manifolds in complex Euclidean spaces}. Duke Math.J., V.32 (1965), no.1, 1-21.

\noindent [BT82] E.Bedford, B.A. Taylor: {\it A new capacity for
plurisubharmonic functions}. Acta Math., V.149  (1982), no. 1-2,
 1-40.

 \noindent[C92] B. Coupet: {\it Construction de disques analytiques et
    r\'egularit\'e de fonctions holomorphes au bord}. Math.Z., V.209
    (1992), no.2, 179-204.

\noindent [DNS]  T.-C. Dinh, V.-A. Nguyn, N. Sibony: {\it Exponential estimates for plurisubharmonic functions and stochastic dynamics}. J. Differential Geom. V. 84 (2010), no. 3, 465-488.

 \noindent [EW10] A. Edigarian and J. Wiegerinck: {\it  Shcherbina's theorem for finely holomorphic functions}.
    Math. Z., V.266 (2010), no.2, 393-398.

  \noindent [FS92] J.E.Fornaes, N.Sibony:{\it Complex Henon mappings in $\C^2$ and Fatou-Bieberbach domains}.
Duke Math .J., V. 65 (1992), 345-380.

 \noindent [GZ07] V.  Guedj and A. Zeriahi, {\it The weighted Monge-Amp\`{e}re energy of quasiplurisubharmonic functions}. J. Funct. Ann. V.250 (2007), 442-482.

 \noindent[HC76] G. M. Henkin and E. M. Chirka: {\it Boundary propertie of holomorphic functions of several variables}.
 J. Math. Sci., V.5 (1976), 612-687.

 \noindent [Kl91] M.Klimek: {\it Pluripotential theory.} London Mathematical Society Monographs. New Series, 6. Oxford Science Publications. The Clarendon Press, Oxford University Press, New York, 1991.

 \noindent[Kol98] S. Kolodziej: {\it The complex Monge-Amp\`ere equation }.
   Acta Math.,  V.180 (1998), 69-117.

 \noindent[Kos97] M. Kosek: {\it H\"older continuity property of filled-in Julia sets in $\C^n$}. Proc. of the
AMS.,V.125 (1997), no. 7, 2029-2032.

 \noindent[PP86] W.Pawlucki, W.Plesniak: {\it Markov's inequality and $C^ {\infty}$ functions on sets with polynomial
cusps.} Math. Ann., V. 275 (1986), no. 3, 467-480.

 \noindent[Pi74] S.Pinchuk: {\it A Boundary-uniqueness theorem for holomorphic functions of several complex
 variables}. Math. Zametky, V.15 (1974),no.2, 205-212

 \noindent[Sa76] A. Sadullaev: {\it A boundary-uniquiness theorem in $\C^n$}. Mathematical Sbornic, V.101(143)(1976),no.4,
 568-583= Math. USSR Sb., V.30 (1976), 510-524.

 \noindent[Sa80] A. Sadullaev: {\it  P-regularity of sets in $C^n$}. Lect. Not. In Math., V.798 (1980),
402-407.

  \noindent[Sa81] A. Sadullaev: {\it Plurisubharmonic measure and capacity on complex manifolds}. Uspehi Math. Nauk,
V.36(220)(1981, no.4, 53-105=
  Russian  Math. Surveys V.36 (1981), 61-119.

  \noindent[SZ12] A.Sadullaev, A.Zeriahi: {\it Subsets of full measure in a generic submanifold in $\C^n$ are non-plurithin}. 	Math. Z., Vol.274 (2013), 1155-1163.

\noindent[Si62] J. Siciak: {\it On some extremal functions and
their applications in the theory of analytic functions of several
complex variables}. Trans. Amer. Math. Soc., V.105 (1962),
322-357.

\noindent[Si81]  J. Siciak: {\it Extremal plurisubharmonic
functions in ${\bf C}^{n}$}. Ann. Polon. Math., V.39 (1981),
175-211.

\noindent[Si97]  J. Siciak: {\it Wiener's type sufficient
conditions in $C^N$}. Univ. Iagel. Acta Math., V.35 (1997), 47-74.

  \noindent[Za74] V.P. Zahariuta:  {\it Extremal plurisubharmonic functions, Hilbert scales, and the isomorphism of spaces of analytic functions of several variables.} I, II. (Russian) Teor. Funkciy Funkcional. Anal. i Prilojenie, V.19 (1974), 133-157;   V.21 (1974), 65-83.

\noindent[Za76] V.P. Zahariuta: {\it Extremal plurisubharmonic
functions, orthogonal polynomials and Bernstein-Walsh theorems for
analytic functions of several complex variables}. Ann. Polon. Math. V.33 (1976/77), no. 1-2, 137-148.

 \noindent[Ze87] A. Zeriahi: {\it Meilleure approximation polynomiale et croissance des fonctions enti\` eres
 sur certaines vari\'et\'es alg\'ebriques affines}.  Ann. Inst. Fourier (Grenoble), V.37 (1987), no.2, 79-104.

 \noindent[Ze91]. A. Zeriahi:  {\it Fonction de Green pluricomplexe \` a p\^ole \`a l'infini sur un espace de
 Stein parabolique et applications.} Math. Scand., V.69 (1991), no.1, 89-126.

\noindent[Ze93] A. Zeriahi:  {\it In\'egalit\'es de Markov et d\'eveloppement en s\'erie de polyn\^omes orthogonaux des
 fonctions $C^{\infty}$ et $A^{\infty}$.} Several complex variables (Stockholm, 1987/1988), 683-701, Math. Notes, 38,
Princeton Univ. Press, Princeton, NJ, 1993.

\noindent[Ze01] A. Zeriahi: {\it Volume and capacity of sublevel sets of a Lelong class of plurisubharmonic functions.}
 Indiana Univ. Math. J., 50 (2001), no.1, 671-703.
\\

\noindent A.Sadullaev, National University of Uzbekistan, \\
100174 Tashkent, Uzbekistan\\
{sadullaev@mail.ru} \\

\noindent A. Zeriahi, Institut de Math\'ematiques de Toulouse \\
Universit\'e Paul Sabatier, 118 Route de Narbonne, \\
31062 Toulouse \\
zeriahi@math.univ-toulouse.fr

 \end{document}